\documentclass[12pt,a4paper]{amsart}
\allowdisplaybreaks[1]

\usepackage{amstext}
\usepackage{amsthm}
\usepackage{amssymb}

\newtheorem{thm}{Theorem}[section]
\newtheorem{lem}[thm]{Lemma}
\newtheorem{prop}[thm]{Proposition}

\theoremstyle{definition}
\newtheorem{defn}[thm]{Definition}

\theoremstyle{remark}
\newtheorem{rem}[thm]{Remark}

\DeclareMathOperator{\dep}{dep}
\DeclareMathOperator{\wt}{wt}

\begin{document}

\title[A combinatorial proof of the weighted sum formula]{A combinatorial proof of the weighted sum formula for finite and symmetric multiple zeta(-star) values}

\author{Hideki Murahara}
\address[Hideki Murahara]{Nakamura Gakuen University Graduate School, 5-7-1, Befu, Jonan-ku,
Fukuoka, 814-0198, Japan}
\email{hmurahara@nakamura-u.ac.jp}

\subjclass[2010]{Primary 11M32}
\keywords{Multiple zeta(-star) values, Finite multiple zeta(-star) values, Symmetric multiple zeta(-star) values, Weighted sum formula}

\begin{abstract}
 Hirose, Saito, and the author established the weighted sum formula for
 finite multiple zeta(-star) values.
 In this paper, we present its alternative proof.
 The proof is also valid for symmetric multiple zeta(-star) values.
\end{abstract}

\maketitle

\section{Introduction}
For positive integers $k_{1},\dots,k_{r}$ with $k_{r}\ge2$, 
the multiple zeta values (MZVs) and the multiple zeta-star values
(MZSVs) are defined by 
\begin{align*}
 \zeta(k_{1},\dots,k_{r})
 &:=\sum_{0<n_{1}<\cdots<n_{r}}\frac{1}{n_{1}^{k_{1}}\cdots n_{r}^{k_{r}}} \in\mathbb{R},\\
 \zeta^{\star}(k_{1},\dots,k_{r})
 &:=\sum_{0<n_{1}\le\cdots\le n_{r}}\frac{1}{n_{1}^{k_{1}}\cdots n_{r}^{k_{r}}} \in\mathbb{R}.
\end{align*}
We set a $\mathbb{Q}$-algebra $\mathcal{A}$ by
\[
 \mathcal{A}:=\biggl(\prod_{p}\mathbb{Z}/p\mathbb{Z}\biggr)\,\Big/\,\biggl(\bigoplus_{p}\mathbb{Z}/p\mathbb{Z\biggr)},
\]
where $p$ runs over all primes.
For positive integers $k_{1},\dots,k_{r}$,
the finite multiple zeta values (FMZVs) and the finite multiple zeta-star
values (FMZSVs) are defined by 
\begin{align*}
 \zeta_{\mathcal{A}}(k_{1},\dots,k_{r})
 &:=\biggl(\sum_{0<n_{1}<\cdots<n_{r}<p} \frac{1}{n_{1}^{k_{1}}\cdots n_{r}^{k_{r}}}\bmod p \biggr)_{p} \in\mathcal{A}, \\
 \zeta_{\mathcal{A}}^{\star}(k_{1},\dots,k_{r})
 &:=\biggl(\sum_{0<n_{1}\le\cdots\le n_{r}<p} \frac{1}{n_{1}^{k_{1}}\cdots n_{r}^{k_{r}}}\bmod p \biggr)_{p} \in\mathcal{A}.
\end{align*}
The symmetric multiple zeta values (SMZVs) were introduced by Kaneko-Zagier
\cite{KZ17}. 
For positive integers $k_{1},\dots,k_{r}$,
we define 
\[
 \zeta_{\mathcal{S}}^{\ast}(k_{1},\dots,k_{r}) 
 :=\sum_{i=0}^{r}(-1)^{k_{i+1}+\cdots+k_{r}}\zeta^{\ast}(k_{1},\dots,k_{i})\zeta^{\ast}(k_{r},\dots,k_{i+1})\in\mathbb{R}.
\]
Here, the symbol $\zeta^{\ast}$ on the right-hand side means 
the regularized value coming from harmonic regularization, i.e.\ a
real value obtained by taking constant terms of harmonic regularization
as explained in Ihara-Kaneko-Zagier \cite{IKZ06}. In the sum, we
understand $\zeta^{\ast}(\emptyset)=1$. Let $\mathcal{Z}_{\mathbb{R}}$
be the $\mathbb{Q}$-vector subspace of $\mathbb{R}$ spanned by $1$
and all MZVs, which is a $\mathbb{Q}$-algebra. Then the SMZVs is
defined by 
\[
 \zeta_{\mathcal{S}}(k_{1},\dots,k_{r}):=\zeta_{\mathcal{S}}^{\ast}(k_{1},\dots,k_{r})\bmod\zeta(2)\in\mathcal{Z}_{\mathbb{R}}/(\zeta(2)).
\]
For positive integers $k_{1},\dots,k_{r}$, we also define the symmetric
multiple zeta-star values (SMZSVs) by 
\[
 \zeta_{\mathcal{S}}^{\star}(k_{1},\dots,k_{r})
 :=\sum_{\substack{\square\textrm{ is either a comma `,' }\\
 \textrm{ or a plus `+'}
 }
 }\zeta_{\mathcal{S}}^{\ast}(k_{1}\square\cdots\square k_{r})\bmod\zeta(2)\in\mathcal{Z}_{\mathbb{R}}/(\zeta(2)).
\]
Let $\mathcal{Z}_{A}$ denote the $\mathbb{Q}$-vector subspace of $\mathcal{A}$ spanned by $1$ and all FMZVs. 
Kaneko and Zagier conjecture
that there is an isomorphism between $\mathcal{Z}_{A}$ and $\mathcal{Z}_{\mathbb{R}}/(\zeta(2))$
as $\mathbb{Q}$-algebras such that $\zeta_{\mathcal{A}}(k_{1},\dots,k_{r})$
and $\zeta_{\mathcal{S}}(k_{1},\dots,k_{r})$ correspond with each
other (for more details, see Kaneko-Zagier \cite{Kan19,KZ17}). 
In the following, we use the letter $\mathcal{F}$ stands for either $\mathcal{A}$ or $\mathcal{S}$,
e.g., the symbol $\zeta_\mathcal{F}$ means $\zeta_\mathcal{A} \textrm{ or } \zeta_\mathcal{S}$. 

Hirose, Saito, and the author \cite{HMS18} proved the following weighted sum formula
for FMZ(S)Vs. In this paper, we will give its alternative proof. 
Our proof is also valid for SMZ(S)Vs. 
\begin{thm} \label{main}
 Let $k$ be a positive integer, $r$ a positive odd integer, and $i$ an integer with $1\le i \le r\le k$. 
 Then we have
 \begin{align*}
  \sum_{\substack{k_{1}+\cdots+k_{r}=k\\k_1,\dots,k_r\ge1}}
  2^{k_{i}}\,\zeta_{\mathcal{F}}(k_{1},\ldots,k_{r}) & =0,\\
  \sum_{\substack{k_{1}+\cdots+k_{r}=k\\k_1,\dots,k_r\ge1}}
  2^{k_{i}}\,\zeta_{\mathcal{F}}^{\star}(k_{1},\ldots,k_{r}) & =0.
 \end{align*}
\end{thm}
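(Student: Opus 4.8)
The plan is to work throughout in the common algebraic framework shared by FMZVs and SMZVs, using only those structural properties that $\zeta_{\mathcal{F}}$ enjoys for \emph{both} $\mathcal{F}=\mathcal{A}$ and $\mathcal{F}=\mathcal{S}$. Concretely, I would use the $\mathbb{Q}$-linear evaluation sending the word $z_{k_1}\cdots z_{k_r}$, where $z_k=x^{k-1}y$, to $\zeta_{\mathcal{F}}(k_1,\dots,k_r)$, together with three facts valid in both settings: the reversal relation $\zeta_{\mathcal{F}}(k_1,\dots,k_r)=(-1)^{k_1+\cdots+k_r}\zeta_{\mathcal{F}}(k_r,\dots,k_1)$; the harmonic (stuffle) product relation; and the vanishing of every depth-one value, $\zeta_{\mathcal{F}}(k)=0$. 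To treat the weight $2^{k_i}$ uniformly I would introduce the two generating series $H=\sum_{k\ge1}z_k$ and $H_2=\sum_{k\ge1}2^{k}z_k=\frac{2y}{1-2x}$, so that, writing $S_i^{[r]}$ for the fixed-weight position-$i$ sum $\sum_{k_1+\cdots+k_r=k}2^{k_i}\zeta_{\mathcal{F}}(k_1,\dots,k_r)$, the quantity $S_i^{[r]}$ is exactly the weight-$k$ component of the image of the concatenation $H^{i-1}H_2H^{r-i}$ under the evaluation map $Z$. The theorem then becomes the assertion that this image vanishes for every odd $r$.

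Two families of linear relations feed the argument. First, reversal immediately yields $S_i^{[r]}=(-1)^{k}S_{r+1-i}^{[r]}$, since reversing an index carries position $i$ to position $r+1-i$ and contributes the global sign $(-1)^{k}$. Second, because $\zeta_{\mathcal{F}}(k)=0$ gives $Z(H)=Z(H_2)=0$, the stuffle relation produces, for each depth-$(r-1)$ index and its forced complementary part, the identity in which the distinguished weighted letter is inserted into each of the $r$ gaps and merged into each of the $r-1$ blocks; summing these shows $\sum_{p=1}^{r}S_p^{[r]}=0$ together with its lower-depth analogues. The remaining task is to upgrade these symmetric consequences to the vanishing of each \emph{individual} $S_i^{[r]}$, and it is here that the parity of $r$ must enter. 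I expect this to come from the antipode $\mathfrak{S}$ of the quasi-shuffle Hopf algebra: since $Z$ is a character for the stuffle product, the relation $Z\circ\mathfrak{S}=Z^{-1}$ (convolution inverse) supplies a family of identities carrying the depth sign $(-1)^{\dep}$, and for odd $r$ this sign combines with the reversal sign $(-1)^{k}$ to collapse the non-symmetric part of the position dependence.

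The main obstacle is precisely this last step: the harmonic relations are symmetric in the position of the weighted argument, so no single one of them detects an individual $S_i^{[r]}$, and one must locate the correct position-dependent combination. I anticipate that the cleanest route is a sign-reversing involution on weighted compositions, or equivalently an explicit binomial identity packaging $2^{k_i}=\sum_{j}\binom{k_i}{j}$, whose cancellation is available exactly when $r$ is odd. Once the non-star statement is established, I would deduce the star statement from the standard expansion $\zeta_{\mathcal{F}}^{\star}(k_1,\dots,k_r)=\sum_{\square}\zeta_{\mathcal{F}}(k_1\square\cdots\square k_r)$ over choices of comma or plus; the subtlety there is that inserting a plus can move the distinguished position $i$ and merge its weight with a neighbour, so one must verify that the induced reorganisation of the factors $2^{k_i}$ remains compatible with the non-star vanishing, again using that $r$ is odd.
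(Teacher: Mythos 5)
Your plan stalls exactly where you admit it does, and the tools you propose to fill the hole cannot do so. From reversal and the stuffle relations (with $\zeta_{\mathcal{F}}(k)=0$) you obtain only consequences that are symmetric or antisymmetric in the position of the weighted letter: $S_i^{[r]}=(-1)^k S_{r+1-i}^{[r]}$ and, after handling the merge terms (which produce even-depth weighted sums that do \emph{not} vanish), an identity expressing $\sum_p S_p^{[r]}$ in terms of lower-depth corrections. Already for $r=3$ and $k$ even these relations are consistent with $S_1=S_3=c$, $S_2=-2c$ for arbitrary $c$, so they cannot force the individual vanishing. The quasi-shuffle antipode cannot break this symmetry either: since $Z$ is a character for the stuffle product in both settings, the identity $Z\circ\mathfrak{S}=Z^{-1}$ is a formal consequence of the stuffle relations you have already used; in the paper this antipode identity is precisely Lemma \ref{antipode}, and it is used only to transfer the result from $\zeta_{\mathcal{F}}$ to $\zeta_{\mathcal{F}}^{\star}$, never to prove the non-star statement. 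What the paper actually relies on are two position-sensitive inputs that appear nowhere in your plan and are not consequences of the harmonic algebra structure: the Hoffman--Jarossay duality $\zeta_{\mathcal{F}}(\boldsymbol{k})=\zeta_{\mathcal{F}}(\phi(\boldsymbol{k}))$ (Theorem \ref{phi}) and Oyama's Ohno-type relation $\zeta_{\mathcal{F}}(G(\boldsymbol{k},l))=0$ (Theorem \ref{ohnoF}). The heart of the proof is then the purely combinatorial Key Lemma \ref{key_lem}: after subtracting a specific linear combination of Ohno-type elements from the weighted sum $F(k,r,i)$, the remainder $H(k,r,i)$ satisfies $H+\phi(H)=(\{1\}^{k})$ or $0$, and it is in that computation (via expansions of $(1-2x)^{d-1}$) that the oddness of $r$ enters. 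Your hoped-for ``sign-reversing involution packaging $2^{k_i}=\sum_j\binom{k_i}{j}$'' is morally this lemma, but without $\phi$-duality and the Ohno-type relation as inputs there is nothing of the right shape to cancel against; the stuffle-generated relation set is simply too small.

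Your reduction of the star case to the non-star case also has a genuine gap. Expanding $\zeta_{\mathcal{F}}^{\star}(k_1,\dots,k_r)=\sum_{\square}\zeta_{\mathcal{F}}(k_1\square\cdots\square k_r)$ produces weighted sums of every depth $r'\le r$, including even $r'$, where the first statement is not available (it is asserted only for odd depth); moreover, after merging, the weight attached to a block is $\sum 2^{k_i}$ over splittings rather than $2^{(\text{entry})}$, so the resulting expressions are not instances of the non-star formula at all. The paper's route is different: insert the weighted sum into the antipode identity of Lemma \ref{antipode}; every mixed term $0<l<r$ factors so that one factor is an \emph{unweighted} full sum over compositions of fixed weight and depth, which vanishes by Proposition \ref{sym_sum}; the $l=0$ term is the non-star weighted sum at the reflected position $r+1-i$ (so the full strength of the first statement, for all $i$, is needed); what survives is the star sum. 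If you want to rescue your outline, you must import Theorems \ref{phi} and \ref{ohnoF} (or inputs of equivalent strength) and then prove the combinatorial cancellation of Lemma \ref{key_lem}; as written, the proposal identifies correct but insufficient relations and leaves the decisive step as a conjecture.
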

\begin{rem}
 We note that similar weighted sum formulas for MZVs are known (see Guo-Xie \cite{GLX09}, Ohno-Zudilin \cite{OZ08}, and Ong-Eie-Liaw \cite{OEL13}). 
 Kamano \cite{Kam18} also obtained somewhat different weighted sum formulas for FMZVs.
\end{rem}

\section{Proof of the main theorem}
\subsection{Notation}
An index is a sequence of positive integers, and
we denote by $\mathcal{I}$ the $\mathbb{Q}$-linear space spanned by the indices. 
Maps defined for indices, such as $\zeta_\mathcal{F}$, will be extended $\mathbb{Q}$-linearly.
For an index $\boldsymbol{k}=(k_{1},\ldots,k_{r})$, 
the integer $k:=k_{1}+\cdots+k_{r}$ is called the weight of $\boldsymbol{k}$
(denoted by $\wt(\boldsymbol{k})$) and the integer $r$ is called
the depth of $\boldsymbol{k}$ (denoted by $\dep(\boldsymbol{k})$).
We write $(\{1\}^m)=(\underbrace{1,\ldots ,1}_{m})$ for nonnegative integer $m$.
For indices $\boldsymbol{k}=(k_{1}\ldots,k_{r})$ and $\boldsymbol{k}'=(k'_{1},\ldots,k'_{r})$ of the same depths,
the symbol $\boldsymbol{k}\oplus\boldsymbol{k}'$ represents the componentwise sum, i.e.\ 
$\boldsymbol{k}\oplus\boldsymbol{k}':=(k_{1}+k'_{1},\ldots,k_{r}+k'_{r})$.
\begin{defn}
 For an index $\boldsymbol{k}=(k_{1}\ldots,k_{r})$, we define $\phi(\boldsymbol{k})$ by
 \[
  \phi(\boldsymbol{k})
  :=(-1)^r \sum_{\substack{\square\textrm{ is either a comma `,' } \\ \textrm{ or a plus `+'}} }
   (\underbrace{1\square1\square\cdots\square 1}_{\substack{ \textrm{the number of} \\ \textrm{`1' is }k_1}}
   ,\cdots
   ,\underbrace{1\square1\square\cdots\square 1}_{\substack{ \textrm{the number of} \\ \textrm{`1' is }k_r}})
  \in\mathcal{I}.
 \]
 For example, we have $\phi(1,2,2)=-(1,2,2)-(1,1,1,2)-(1,2,1,1)-(1,1,1,1,1)$.
\end{defn}
\begin{defn}
 For a nonempty index $\boldsymbol{k}=(k_1,\dots,k_r)$, we define Hoffman's dual index of $\boldsymbol{k}$ by
 \begin{align*}
  \boldsymbol{k}^{\vee}=(\underbrace{1,\dots,1}_{k_1}+\underbrace{1,\dots,1}_{k_2}+1,\dots,1+\underbrace{1,\dots,1}_{k_r}).
 \end{align*}
\end{defn}
For positive integers $k,r,i$ with $1\le i\le r\le k$, we set 
\begin{align*}
 F(k,r,i)
 :=\sum_{\substack{k_{1}+\cdots+k_{r}=k\\k_1,\dots,k_r\ge1}}
  2^{k_{i}-1} \cdot (k_1,\ldots,k_r) \in\mathcal{I}.
\end{align*}
Throughout this paper, we always assume that $\boldsymbol{e}$ runs over sequences of nonnegative integers.
(Recall that an index is a sequence of positive integers.) 
For an index $\boldsymbol{k}$ and a nonnegative integer $l$, we also set  
\begin{align*}
 G_{1}(\boldsymbol{k},l)
 &:=\sum_{\substack{\wt(\boldsymbol{e})=l\\ \dep(\boldsymbol{e})=\dep(\boldsymbol{k})}}
  (\boldsymbol{k}\oplus\boldsymbol{e}) \in\mathcal{I},\\
 G_{2}(\boldsymbol{k},l)
 &:=\sum_{\substack{\wt(\boldsymbol{e})=l\\ \dep (\boldsymbol{e})=\dep(\boldsymbol{k}^\vee)}}
  (\boldsymbol{k}^{\vee}\oplus\boldsymbol{e})^{\vee} \in\mathcal{I},\\
 G(\boldsymbol{k},l)
 &:=G_{1}(\boldsymbol{k},l)-G_{2}(\boldsymbol{k},l) \in\mathcal{I}.
\end{align*}
For example, we have $G_{1}((2,3),1)=(3,3)+(2,4)$.
For positive integers $k,r,i$ with $1\le i\le r\le k$, we put
\begin{align*}
 &H(k,r,i) \\
 &:=F(k,r,i) \\
 &\quad
  -\left(\sum_{l=1}^{k-r-1}
  2^{l-1} G((\{1\}^{i-1},l+1,\{1\}^{r-i}),k-r-l) 
  +G((\{1\}^{r}),k-r)\right) \in\mathcal{I}.
\end{align*}
\subsection{Proof of Theorem \ref{main}}
To prove Theorem \ref{main}, we use Theorems \ref{phi} and \ref{ohnoF}, and Lemma \ref{key_lem}. 
\begin{thm}[Hoffman \cite{Hof15}, Jarossay \cite{Jar14}] \label{phi}
 For an index $\boldsymbol{k}$, we have 
 \begin{align*}
  \zeta_{\mathcal{F}}(\boldsymbol{k}) & =\zeta_{\mathcal{F}}(\phi(\boldsymbol{k})).
 \end{align*}
\end{thm}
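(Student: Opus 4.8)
The plan is to prove the single displayed identity $\zeta_{\mathcal F}(\boldsymbol k)=\zeta_{\mathcal F}(\phi(\boldsymbol k))$ by reducing it to a model-independent relation in the index space $\mathcal I$ and then verifying that relation using the structural properties common to $\zeta_{\mathcal A}$ and $\zeta_{\mathcal S}$. First I would unwind $\phi$ combinatorially. Expanding the inner sum over the symbols $\square$ replaces each entry $k_j$ by an arbitrary composition of $k_j$, so that $\phi(\boldsymbol k)=(-1)^{\dep(\boldsymbol k)}\sum_{\boldsymbol c}\boldsymbol c$, the signed sum over all indices $\boldsymbol c$ refining $\boldsymbol k$, i.e.\ obtained by splitting each part into a composition. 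Writing $R(\boldsymbol k)=\sum_{\boldsymbol c\succeq\boldsymbol k}\boldsymbol c$ for this refinement sum and $T(\boldsymbol k)=\sum_{\boldsymbol d\preceq\boldsymbol k}\boldsymbol d$ for the coarsening sum (so that $\zeta_{\mathcal F}(T(\boldsymbol k))=\zeta_{\mathcal F}^{\star}(\boldsymbol k)$), the definition of Hoffman's dual gives the clean conjugation $R={\vee}\circ T\circ{\vee}$, because dualizing turns ``more commas'' into ``fewer commas''. Thus the target becomes $(-1)^{\dep(\boldsymbol k)}\zeta_{\mathcal F}\big(({\vee}\,T\,{\vee})(\boldsymbol k)\big)=\zeta_{\mathcal F}(\boldsymbol k)$.

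Next I would isolate the inputs. Both $\zeta_{\mathcal A}$ and $\zeta_{\mathcal S}$ are $\mathbb Q$-algebra homomorphisms for the harmonic (stuffle) product, they satisfy the reversal relation $\zeta_{\mathcal F}(k_1,\dots,k_r)=(-1)^{k}\zeta_{\mathcal F}(k_r,\dots,k_1)$, and they satisfy a Hoffman-type duality of the shape $\zeta_{\mathcal F}(\boldsymbol k^{\vee})=-\zeta_{\mathcal F}^{\star}(\boldsymbol k)$; each of these is elementary in the two models (for $\mathcal A$, reversal is the substitution $n_i\mapsto p-n_i$ inside the mod\,$p$ sum, and the stuffle and duality relations are the corresponding manipulations of the finite harmonic sums, while for $\mathcal S$ they are the analogous statements for the harmonic-regularized values). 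Feeding the duality into the conjugated form above rewrites the outer $\vee$ as $-T$, turning the claim into an identity purely about the iterated coarsening operator $T$ and the reversal, evaluated against $\zeta_{\mathcal F}$. Concretely one is left with a weighted vanishing $\sum_{\boldsymbol e\preceq\boldsymbol k^{\vee}}c(\boldsymbol e)\,\zeta_{\mathcal F}(\boldsymbol e)=0$ whose coefficients $c(\boldsymbol e)$ are explicit signed powers of $2$, coming from the sizes of the Boolean intervals $[\boldsymbol e,\boldsymbol k^{\vee}]$ in the refinement lattice.

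The decisive step, and the one I expect to be hardest, is to prove this weighted vanishing. It does not follow from reversal and duality alone: the powers of $2$ record genuine depth changes, and closing the identity requires the stuffle relations, which supply exactly the combinations of lower-depth values needed to cancel the $2$-power weights. I would carry this out by induction on the depth $r$, using the stuffle product to peel off the first block of $\boldsymbol k$ and to express the depth-$r$ instance through strictly lower ones, so that the inductive hypothesis applies; the base case $r=1$ is precisely the single-index duality $\zeta_{\mathcal F}^{\star}(\{1\}^{k})=-\zeta_{\mathcal F}(k)$. Because every ingredient---stuffle, reversal, duality---is available verbatim for both $\mathcal A$ and $\mathcal S$, the whole argument is a single computation in $\mathcal I$ modulo the subspace spanned by these relations, which is exactly why the proof is simultaneously valid for FMZ(S)Vs and SMZ(S)Vs. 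The main obstacle is therefore bookkeeping: tracking the signs $(-1)^{\dep}$ and the $2$-power coefficients through the refinement/coarsening duality so that the stuffle cancellations line up, rather than any single hard estimate.
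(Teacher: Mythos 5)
This theorem is not proved in the paper at all: it is quoted from Hoffman \cite{Hof15} (the case $\mathcal{F}=\mathcal{A}$) and Jarossay \cite{Jar14} (the case $\mathcal{F}=\mathcal{S}$), so there is no in-paper argument to compare yours against; your attempt must stand on its own, and it does not. Your reduction is fine as far as it goes: $\phi(\boldsymbol{k})=(-1)^{\dep(\boldsymbol{k})}R(\boldsymbol{k})$ with $R=\vee\circ T\circ\vee$ is a correct reading of the definitions, and inserting the duality $\zeta_{\mathcal{F}}\circ T=-\zeta_{\mathcal{F}}\circ\vee$ turns the claim into $\zeta_{\mathcal{F}}(T^{2}(\boldsymbol{k}^{\vee}))=(-1)^{\dep(\boldsymbol{k})+1}\zeta_{\mathcal{F}}(\boldsymbol{k})$, a weighted identity with $2$-power multiplicities. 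But that identity, which you yourself flag as the decisive step, is never established: ``induction on depth, peeling off the first block with the stuffle product'' is a one-sentence plan with no inductive identity written down, and nothing shows that the stuffle expansion reproduces the lower-depth instances with the correct signed $2$-power coefficients. This step is not bookkeeping; it carries the entire content of the theorem. Indeed, duality alone provably does not suffice: in weight $3$ Hoffman duality yields exactly the relations $\zeta_{\mathcal{A}}(3)+\zeta_{\mathcal{A}}(1,1,1)=0$ and $\zeta_{\mathcal{A}}(1,2)+\zeta_{\mathcal{A}}(2,1)+\zeta_{\mathcal{A}}(3)=0$, while the $\phi$-relation for $(1,2)$ demands $\zeta_{\mathcal{A}}(1,1,1)=0$, which is not in their span. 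So whether your particular combination of stuffle, reversal, and duality closes the induction is precisely what needs proof, and it is absent.

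The second, more structural problem is circularity in your choice of inputs. You treat the Hoffman-type duality $\zeta_{\mathcal{F}}(\boldsymbol{k}^{\vee})=-\zeta_{\mathcal{F}}^{\star}(\boldsymbol{k})$ as ``elementary in the two models,'' on a par with reversal and stuffle. It is not. For $\mathcal{A}$, reversal really is the substitution $n_i\mapsto p-n_i$, but duality requires a genuinely nontrivial argument (for instance the congruence $\binom{p-1}{n}\equiv(-1)^{n}\pmod{p}$ together with a careful resummation, or Hoffman's quasi-symmetric function machinery); it is the main theorem of the very reference \cite{Hof15} that this statement cites, where it and the $\phi$-relation come out of the same apparatus. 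For $\mathcal{S}$ the situation is worse: the known proofs of the duality for symmetric multiple zeta values are of the same depth as, and historically derived alongside or from, Jarossay's $\phi$-relation --- i.e., the theorem you are trying to prove. So in the symmetric case you are assuming a statement essentially equivalent (modulo the very bridge you have not constructed) to your goal. A self-contained proof would have to either prove duality in both models from scratch or argue directly on the defining objects (the mod $p$ harmonic sums for $\mathcal{A}$, the regularized values for $\mathcal{S}$), which is what the cited sources actually do.
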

The following relation for FMZVs and SMZVs was conjectured by Kaneko and established by Oyama \cite{Oya15}.
\begin{thm}[Oyama \cite{Oya15}] \label{ohnoF}
 For an index $\boldsymbol{k}$ and a nonnegative integer $l$, we have 
 \[
  \zeta_{\mathcal{F}}(G(\boldsymbol{k},l))=0.
 \]
\end{thm}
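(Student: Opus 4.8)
The plan is to prove Theorem~\ref{ohnoF} as an Ohno-type relation, i.e.\ as the assertion that $\zeta_{\mathcal{F}}(G_{1}(\boldsymbol{k},l))=\zeta_{\mathcal{F}}(G_{2}(\boldsymbol{k},l))$ for every $l$, by realizing the two sides as the two extreme specializations of a single bilinear ``connected sum'' and transporting from one extreme to the other one box at a time. First I would dispose of the case $l=0$, which is trivial: since Hoffman's dual is an involution we have $G_{1}(\boldsymbol{k},0)=\boldsymbol{k}=(\boldsymbol{k}^{\vee})^{\vee}=G_{2}(\boldsymbol{k},0)$, so $G(\boldsymbol{k},0)=0$. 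Thus the content lies in $l\ge1$, and the identity should be read as a weight-$l$ deformation of the duality that Theorem~\ref{phi} expresses through $\phi$. The structural point to exploit is that $G_{1}$ distributes the $l$ extra boxes directly among the parts of $\boldsymbol{k}$, whereas $G_{2}$ distributes them among the parts of the Hoffman dual $\boldsymbol{k}^{\vee}$ and then dualizes back; in the word model of $1$'s separated by commas and pluses these are the same insertion carried out on opposite sides of the comma/plus dichotomy.

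Concretely, for a pair of indices $\boldsymbol{a},\boldsymbol{b}$ I would introduce a connected sum $Z_{\mathcal{F}}(\boldsymbol{a}\mid\boldsymbol{b})\in\mathcal{F}$ obtained by running the two blocks of summation variables and gluing them with a \emph{connector} built from reciprocal factors such as $1/(n+m)$, read modulo~$p$ in the $\mathcal{A}$ picture and via its harmonic/shuffle-regularized real counterpart in the $\mathcal{S}$ picture. The construction is to be arranged so that the extreme configuration in which the whole deformation sits on the $\boldsymbol{a}$ side reproduces $\zeta_{\mathcal{F}}(G_{1}(\boldsymbol{k},l))$, while the opposite extreme, with the deformation on the $\boldsymbol{b}$ side, reproduces $\zeta_{\mathcal{F}}(G_{2}(\boldsymbol{k},l))$ after applying Hoffman duality to read off the index.

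The heart of the argument is a transport relation asserting that moving a single box from $\boldsymbol{b}$ to $\boldsymbol{a}$---equivalently, converting one comma of the dual word into a plus---leaves $Z_{\mathcal{F}}$ unchanged. In the finite case this reduces to an elementary congruence: a partial-fraction splitting of the connector followed by a telescoping of sums $\sum 1/\bigl(n^{a}(n+m)\bigr)\bmod p$, used together with the reversal relation $\zeta_{\mathcal{A}}(k_{1},\dots,k_{r})=(-1)^{\wt(k_{1},\dots,k_{r})}\zeta_{\mathcal{A}}(k_{r},\dots,k_{1})$. The same algebraic splitting remains valid after regularization, which is what lets the identical bookkeeping run for $\mathcal{S}$. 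Iterating the transport relation across all $l$ boxes connects the two extremes and yields $\zeta_{\mathcal{F}}(G_{1}(\boldsymbol{k},l))=\zeta_{\mathcal{F}}(G_{2}(\boldsymbol{k},l))$, that is, $\zeta_{\mathcal{F}}(G(\boldsymbol{k},l))=0$.

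The main obstacle I anticipate is twofold. First, one must pin down the connector precisely enough that the two boundary specializations land on $G_{1}$ and $G_{2}$ with exactly the right multiplicities; the Hoffman-dual bookkeeping on the $G_{2}$ side is where the comma/plus swap has to be matched box-for-box against the transport, and where off-by-one errors are easy. Second, one must verify the single-box transport identity uniformly for both $\mathcal{A}$ and $\mathcal{S}$: the finite version is a clean congruence, but confirming that it survives harmonic/shuffle regularization for $\mathcal{S}$---so that no stray regularization constant or $\zeta(2)$-ambiguity obstructs the telescoping---is the delicate part. A less uniform alternative would be to follow Oyama's original derivation inside the algebraic formalism of Kaneko-Zagier, but the connected-sum route has the advantage of treating $\mathcal{A}$ and $\mathcal{S}$ simultaneously and of keeping the combinatorics of Hoffman duality in plain view throughout.
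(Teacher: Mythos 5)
First, a point of order: the paper does not prove this statement at all. It is imported wholesale as Oyama's theorem \cite{Oya15} (a conjecture of Kaneko) and used as a black box alongside Theorem~\ref{phi} in the proof of Theorem~\ref{main}; so your proposal is competing with Oyama's paper, not with anything in this one, and the only question is whether your sketch stands on its own. It does not. The strategy you describe --- realize $\zeta_{\mathcal{F}}(G_{1}(\boldsymbol{k},l))$ and $\zeta_{\mathcal{F}}(G_{2}(\boldsymbol{k},l))$ as the two extreme specializations of a connected sum and transport one box at a time --- is the Seki--Yamamoto connector method, which is indeed known to be capable of proving Ohno-type relations, and your $l=0$ reduction is correct since Hoffman's dual is an involution. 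But everything that would make this a proof is deferred: you never write down the connector (for the mod $p$ case it has to be a specific binomial-type factor, chosen so that the boundary specializations reproduce exactly the comma/plus bookkeeping of $\boldsymbol{k}^{\vee}$), you never state, let alone verify, the single-box transport identity, and you never check that the two boundary configurations land on $G_{1}$ and $G_{2}$ with the correct multiplicities. You instead list these as ``anticipated obstacles.'' Since those obstacles \emph{are} the theorem, what you have is a research plan with a theorem-shaped hole in the middle.

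The more fundamental gap is the $\mathcal{S}$ case. The assertion that ``the same algebraic splitting remains valid after regularization'' cannot be waved through: $\zeta_{\mathcal{S}}$ is not a sum over a finite range that one can telescope modulo $p$; by definition it is a signed convolution of harmonically regularized MZVs, taken modulo $\zeta(2)$. Partial fractions and telescoping of the underlying series do not transfer verbatim to such objects --- the known ways of running a connector argument on the symmetric side go through truncated or $t$-adic avatars of SMZVs, where one must prove the transport relations for truncations, control the error terms in the limit, and check that nothing escapes the ideal $(\zeta(2))$. (By contrast, the reversal relation you invoke is fine for both cases: for $\mathcal{A}$ it is the substitution $n\mapsto p-n$, and for $\mathcal{S}$ it follows formally from the definition of $\zeta_{\mathcal{S}}^{\ast}$.) None of this is engaged with in your sketch, so even an optimistic completion of your outline yields only the $\mathcal{F}=\mathcal{A}$ half, whereas the statement --- and Theorem~\ref{main}, which quotes it --- requires both $\mathcal{A}$ and $\mathcal{S}$.
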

\begin{lem}[Key lemma] \label{key_lem}
 Let $k$ be a positive integer, $r$ a positive odd integer, and $i$ an integer with $1\le i \le r\le k$. 
 Then we have
 \begin{align*}
  H(k,r,i)+\phi(H(k,r,i))=
  \begin{cases}
   \,(\{1\}^{k})\quad & (k\textrm{:even}),\\
   \quad0 & (k\textrm{:odd}).
  \end{cases}
 \end{align*}
\end{lem}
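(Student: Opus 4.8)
The plan is to simplify $H(k,r,i)$ drastically via a telescoping cancellation between $F(k,r,i)$ and the $G_1$-parts of the correction terms, and then to reduce the whole statement, through Hoffman duality, to a single coefficient computation whose outcome is governed by the parity of $r$.

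First I would record the combinatorial meaning of $\phi$. Writing an index $\boldsymbol{a}$ as a word of $\wt(\boldsymbol{a})$ ones separated by commas (between blocks) and pluses (inside blocks), one has $\phi(\boldsymbol{a})=(-1)^{\dep(\boldsymbol{a})}\sum_{\boldsymbol{b}\succeq\boldsymbol{a}}\boldsymbol{b}$, where $\boldsymbol{b}\succeq\boldsymbol{a}$ means $\boldsymbol{b}$ refines $\boldsymbol{a}$ (its comma-set contains that of $\boldsymbol{a}$). A short M\"obius-type computation gives $\phi^2=\mathrm{id}$, and, since Hoffman duality reverses the refinement order, the intertwining relation $\phi(\boldsymbol{a}^{\vee})=(-1)^{\wt(\boldsymbol{a})+1}(\overline{\phi}(\boldsymbol{a}))^{\vee}$, where $\overline{\phi}(\boldsymbol{a}):=(-1)^{\dep(\boldsymbol{a})}\sum_{\boldsymbol{b}\preceq\boldsymbol{a}}\boldsymbol{b}$ is the analogous signed sum over coarsenings.

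The key reduction is as follows. Computing the coefficient of a fixed depth-$r$ index in $\sum_{l=1}^{k-r-1}2^{l-1}G_1(\boldsymbol{k}^{(l)},k-r-l)+G_1((\{1\}^{r}),k-r)$ yields exactly $2^{k_i-1}$ for every such index except the extremal one $\boldsymbol{k}^{*}:=(\{1\}^{i-1},k-r+1,\{1\}^{r-i})$, for which the geometric sum stops one step early and leaves a surplus $2^{k-r-1}$. Hence the $G_1$-parts cancel $F(k,r,i)$ up to this surplus, and together with $G_2((\{1\}^{r}),k-r)=(k)^{\vee}=(\{1\}^{k})$ we obtain
\[
 H(k,r,i)=2^{k-r-1}\boldsymbol{k}^{*}+(\{1\}^{k})+\sum_{l=1}^{k-r-1}2^{l-1}G_2(\boldsymbol{k}^{(l)},k-r-l),\qquad \boldsymbol{k}^{(l)}:=(\{1\}^{i-1},l+1,\{1\}^{r-i}).
\]
Next I would compute $(\boldsymbol{k}^{(l)})^{\vee}=(i,\{1\}^{l-1},r-i+1)$, which shows $G_2(\boldsymbol{k}^{(l)},k-r-l)=\sum_{\boldsymbol{n}}\boldsymbol{n}^{\vee}$ over all weight-$k$, depth-$(l+1)$ indices $\boldsymbol{n}$ with first entry $\ge i$ and last entry $\ge r-i+1$. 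Since $\boldsymbol{k}^{*}$ and $(\{1\}^{k})$ are themselves such duals (the $l=k-r$ and $l=0$ endpoints), every term of $H$ is a Hoffman dual and $H=\widehat{H}^{\vee}$ for an explicit $\widehat{H}$.

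Finally, using the intertwining relation and injectivity of $\vee$, the lemma becomes the identity $\widehat{H}+(-1)^{k+1}\overline{\phi}(\widehat{H})=\frac{1+(-1)^{k}}{2}(k)$. I would prove this by reorganizing $\overline{\phi}(\widehat{H})$ as a sum over coarsenings $\boldsymbol{p}$: the inner weight $\sum_{\boldsymbol{n}\succeq\boldsymbol{p}}(-2)^{\dep(\boldsymbol{n})}$ factorizes over the blocks of $\boldsymbol{p}$, each interior block contributing $2(-1)^{p_j}$ and each boundary block a constrained variant such as $2(-1)^{p_1-i+1}$. For $\dep(\boldsymbol{p})\ge2$ the resulting sign is $(-1)^{k-r-1}$, which equals $(-1)^{k}$ \emph{precisely because $r$ is odd}, and this is exactly what forces those coefficients to cancel against $\widehat{H}$. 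The main obstacle is the depth-one term $\boldsymbol{p}=(k)$: there the two boundary constraints act on the same block, the factorization fails, and one must separately evaluate the constrained count; this single term produces the surviving $\frac{1+(-1)^{k}}{2}(k)$ and pins down both cases. I expect the bookkeeping in this reduction---tracking the exceptional $l=0$ and $l=k-r$ endpoints and the depth-one block---to be the only delicate part.
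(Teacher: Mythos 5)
Your proposal is correct, and its first half is exactly the paper's own argument: the telescoping coefficient count showing that the $G_1$-parts cancel $F(k,r,i)$ up to the surplus $2^{k-r-1}(\{1\}^{i-1},k-r+1,\{1\}^{r-i})$ is precisely Lemma~\ref{lem1}, and the identifications $G_2((\{1\}^{r}),k-r)=(\{1\}^{k})$ and $(\{1\}^{i-1},l+1,\{1\}^{r-i})^{\vee}=(i,\{1\}^{l-1},r-i+1)$ appear verbatim in the proof of Lemma~\ref{lem2}. Where you genuinely diverge is in how $\phi$ is handled. The paper stays on the ``$1$-padded'' side: it expands each $G_2$-term as $\sum(\{1\}^{i-1},k_1,\ldots,k_{k-r-l+1},\{1\}^{r-i})$, counts the coefficient of a fixed such index in $B$ and in $\phi(B)$ separately via signed binomial sums (with $(\{1\}^{k})$ as a second special case), and compares against the directly expanded $C=-2^{k-r-1}(\boldsymbol{k}^{*}+\phi(\boldsymbol{k}^{*}))$. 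You instead prove the intertwining relation $\phi(\boldsymbol{a}^{\vee})=(-1)^{\wt(\boldsymbol{a})+1}(\overline{\phi}(\boldsymbol{a}))^{\vee}$, observe that every term of $H$ is a Hoffman dual, and transport the whole lemma to the pre-dual side, where it becomes $\widehat{H}+(-1)^{k+1}\overline{\phi}(\widehat{H})=\tfrac{1+(-1)^{k}}{2}(k)$ and the signed coarsening sums factor over blocks; I checked the block factors ($2(-1)^{p_1-i+1}$, $2(-1)^{p_j}$, $2(-1)^{p_s-r+i}$, total sign $(-1)^{k-r+1}$) and the depth-one evaluation, and they do yield the claimed identity. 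The two computations are dual images of one another, so the underlying combinatorics is the same, but your packaging is more structural: the role of ``$r$ odd'' is isolated in one sign, and the paper's two separate coefficient counts collapse into the single unavoidable exception $\boldsymbol{p}=(k)$; the price is extra formalism ($\overline{\phi}$, injectivity of $\vee$) that the paper's more elementary route avoids. One caution for your flagged endpoint bookkeeping, which incidentally affects the paper's Lemma~\ref{lem1} too: when $k=r$ the intermediate formula $H=2^{k-r-1}\boldsymbol{k}^{*}+\cdots$ degenerates (the coefficient becomes $\tfrac12$ and the $l=0$ and $l=k-r$ endpoints collide), and it is only because $k=r$ forces $k$ odd, so that all contributions carry the vanishing factor $1+(-1)^{k}$, that the final statement is unaffected; this case should be noted or excluded explicitly.
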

To prove Lemma \ref{key_lem}, we need Lemmas \ref{lem1} and \ref{lem2}.
\begin{lem} \label{lem1}
 Let $k,r$ be positive integers and $i$ an integer with $1\le i \le r\le k$. 
 Then we have
 \begin{align*}
  F(k,r,i)
   -\left(\sum_{l=1}^{k-r-1}
  2^{l-1} G_1((\{1\}^{i-1},l+1,\{1\}^{r-i}),k-r-l) 
  +G_1((\{1\}^{r}),k-r)\right) \\
  =2^{k-r-1}\,(\{1\}^{i-1},k-r+1,\{1\}^{r-i}).
 \end{align*}
\end{lem}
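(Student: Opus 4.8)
The plan is to prove the identity by comparing the coefficient of each index $\boldsymbol{a}=(a_1,\dots,a_r)$ of weight $k$ and depth $r$ on the two sides. Every term occurring in the asserted equation is supported on such indices (the two flavours of $G_1$ produce indices of weight $(r+l)+(k-r-l)=k$, resp.\ $r+(k-r)=k$, and depth $r$), so the identity in $\mathcal{I}$ reduces to a scalar identity for each fixed $\boldsymbol{a}$.

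First I would unwind the $G_1$ terms combinatorially. Since $(\{1\}^{i-1},l+1,\{1\}^{r-i})$ has depth $r$ and weight $r+l$, writing $\boldsymbol{a}=(\{1\}^{i-1},l+1,\{1\}^{r-i})\oplus\boldsymbol{e}$ with $\boldsymbol{e}$ a sequence of nonnegative integers forces $a_i\ge l+1$ and $a_j\ge1$ for $j\ne i$, and conversely every such $\boldsymbol{a}$ arises exactly once. Hence $G_1((\{1\}^{i-1},l+1,\{1\}^{r-i}),k-r-l)$ is the sum, each term with coefficient $1$, of precisely those weight-$k$ depth-$r$ indices whose $i$-th entry satisfies $a_i\ge l+1$, while $G_1((\{1\}^r),k-r)$ is the sum of all weight-$k$ depth-$r$ indices.

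Next I would read off the coefficient of a fixed $\boldsymbol{a}$ on the left-hand side: from $F(k,r,i)$ it equals $2^{a_i-1}$, and the bracketed expression contributes
\[
\sum_{l=1}^{\min(k-r-1,\,a_i-1)}2^{l-1}+1 .
\]
Using $\sum_{l=1}^{m}2^{l-1}=2^{m}-1$ together with the bound $a_i\le k-r+1$ (the other $r-1$ entries being each at least $1$), I would split into two cases. If $a_i\le k-r$, the minimum is $a_i-1$, the bracketed coefficient is $(2^{a_i-1}-1)+1=2^{a_i-1}$, and the net coefficient on the left is $2^{a_i-1}-2^{a_i-1}=0$, in agreement with the right-hand side. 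If $a_i=k-r+1$, then $a_j=1$ for every $j\ne i$, so $\boldsymbol{a}=(\{1\}^{i-1},k-r+1,\{1\}^{r-i})$; now the minimum is $k-r-1$, the bracketed coefficient is $(2^{k-r-1}-1)+1=2^{k-r-1}$, and the net coefficient is $2^{k-r}-2^{k-r-1}=2^{k-r-1}$, matching the right-hand side.

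The calculation is elementary, so I expect no conceptual difficulty; the delicate point is the upper cutoff $l=k-r-1$ of the sum. It is exactly this truncation, stopping one step short of $a_i-1=k-r$, that keeps the geometric sum from cancelling the $F$-coefficient at the maximal index, and so produces the lone residual term $2^{k-r-1}(\{1\}^{i-1},k-r+1,\{1\}^{r-i})$. Getting this boundary bookkeeping right is the heart of the proof.
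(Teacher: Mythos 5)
Your proposal is correct and is essentially the paper's own proof: fix an index $(a_1,\dots,a_r)$ of weight $k$ and depth $r$, observe that its coefficient in the bracketed expression is $\sum_{l=1}^{\min(k-r-1,\,a_i-1)}2^{l-1}+1$ versus $2^{a_i-1}$ in $F(k,r,i)$, and split into the cases $a_i\le k-r$ and $a_i=k-r+1$. The only differences are presentational—you spell out the unwinding of the $G_1$ terms and the $\min$ cutoff more explicitly than the paper—and both arguments tacitly assume $k>r$ (in the degenerate case $k=r$ the left-hand side is $0$ while the stated right-hand side is $2^{-1}(\{1\}^r)$, a defect of the lemma's formulation that the paper's proof shares).
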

\begin{proof}
 We note that the depths of all the indices on the left-hand side are $r$.
 Put
 \begin{align*}
  A:=\sum_{l=1}^{k-r-1}
  2^{l-1} G_1((\{1\}^{i-1},l+1,\{1\}^{r-i}),k-r-l) 
  +G_1((\{1\}^{r}),k-r). 
 \end{align*}
 Fix an index $(a_1,\ldots,a_i,\ldots,a_r)$ of weight $k$ and depth $r$. 
 Since the number of indices $(a_1,\ldots,a_i,\ldots,a_r)$ included in $A$ is 
 \begin{align*}
  \begin{cases}
   \displaystyle{
    \sum_{l=1}^{a_i-1} 2^{l-1}+1=2^{a_i-1} \quad (1\le a_i\le k-r), 
   } \\
   \displaystyle{
    \sum_{l=1}^{k-r-1} 2^{l-1}+1=2^{k-r-1} \quad (a_i=k-r+1), 
   } \\
  \end{cases}
 \end{align*}
 we find that all the indices appeared in $F(k,r,i)$ and $A$ are same except for  $2^{k-r-1}\,(\{1\}^{i-1},k-r+1,\{1\}^{r-i})$.
 Thus we find the result. 
\end{proof}
\begin{lem} \label{lem2}
 Let $k$ be a positive integer, $r$ a positive odd integer, and $i$ an integer with $1\le i \le r\le k$. 
 Then we have
 \begin{align*}
  &\left(\sum_{l=1}^{k-r-1}
  2^{l-1} G_2((\{1\}^{i-1},l+1,\{1\}^{r-i}),k-r-l) 
  +G_2((\{1\}^{r}),k-r)\right) \\
  &+\phi
  \left(\sum_{l=1}^{k-r-1}
  2^{l-1} G_2((\{1\}^{i-1},l+1,\{1\}^{r-i}),k-r-l) 
  +G_2((\{1\}^{r}),k-r)\right) \\
  &=
  \begin{cases}
   -2^{k-r-1}\,( (\{1\}^{i-1},k-r+1,\{1\}^{r-i}) + \phi (\{1\}^{i-1},k-r+1,\{1\}^{r-i}) )
   +(\{1\}^{k}) \\
   \,\,\,\qquad\qquad\qquad\qquad\qquad\qquad\qquad\qquad\qquad\qquad\qquad\qquad\qquad\qquad (k\textrm{:even}),\\
   -2^{k-r-1}\,( (\{1\}^{i-1},k-r+1,\{1\}^{r-i}) + \phi (\{1\}^{i-1},k-r+1,\{1\}^{r-i}) ) \\
   \,\quad\qquad\qquad\qquad\qquad\qquad\qquad\qquad\qquad\qquad\qquad\qquad\qquad  (k\textrm{:odd}).
  \end{cases}
 \end{align*}
\end{lem}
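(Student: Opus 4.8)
The plan is to pass to the standard encoding of indices by their comma sets. Identify an index of weight $k$ with the subset $E\subseteq\{1,\dots,k-1\}$ recording the positions of its commas, and write $\iota(E)$ for that index; then $\dep(\iota(E))=|E|+1$, Hoffman duality is complementation $\iota(E)^{\vee}=\iota(\{1,\dots,k-1\}\setminus E)$, and the definition of $\phi$ (a signed sum over all indices refining $\boldsymbol{k}$, i.e.\ splitting each part into a composition) becomes, in this encoding, a signed sum over supersets of the comma set:
\[
 \phi(\iota(E))=(-1)^{|E|+1}\sum_{D\supseteq E}\iota(D).
\]
Thus $\vee$, $\phi$, and the maps $G_i$ all become operations on the Boolean lattice, which is where I would carry out the whole computation. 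Write $S$ for the expression inside $\phi$ in the statement.

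First I would dualize the $G_2$-terms. Applying $\vee$ termwise and using $(\boldsymbol{k}^{\vee}\oplus\boldsymbol{e})^{\vee\vee}=\boldsymbol{k}^{\vee}\oplus\boldsymbol{e}$ gives $G_2(\boldsymbol{k},l)^{\vee}=G_1(\boldsymbol{k}^{\vee},l)$. Since $(\{1\}^{i-1},l+1,\{1\}^{r-i})^{\vee}=(i,\{1\}^{l-1},r-i+1)$ and $(\{1\}^{r})^{\vee}=(r)$, we get $S^{\vee}=\sum_{l=1}^{k-r-1}2^{l-1}G_1((i,\{1\}^{l-1},r-i+1),k-r-l)+(k)$. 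Reading off the indices that occur, $S^{\vee}$ is the sum of all indices $\boldsymbol{n}$ of weight $k$ whose first part is $\ge i$ and whose last part is $\ge r-i+1$, each weighted by $2^{\dep(\boldsymbol{n})-2}$ (the depth-one term $(k)$ by $1$), with the single index $(i,\{1\}^{k-r-1},r-i+1)=P^{\vee}$ omitted, where $P:=(\{1\}^{i-1},k-r+1,\{1\}^{r-i})$. Transporting back through $\vee$, and checking that the part-size constraints translate into $C(P)\subseteq E$ (with $C(P)=\{1,\dots,i-1\}\cup\{k-r+i,\dots,k-1\}$ the comma set of $P$) while the omission becomes $E\ne C(P)$, this reads
\[
 S=\sum_{C(P)\subsetneq E}2^{\,k-|E|-2}\,\iota(E)+\tfrac12\,(\{1\}^{k}),
\]
the extra $\tfrac12(\{1\}^{k})$ correcting the depth-one coefficient at $E=\{1,\dots,k-1\}$.

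Then I would apply $\phi$, interchange the two summations, and for each fixed $D\supsetneq C(P)$ evaluate the inner sum over $E$ with $C(P)\subsetneq E\subseteq D$. With $m=|D\setminus C(P)|$ the binomial theorem collapses $\sum_{f=1}^{m}\binom{m}{f}(-1)^{f}2^{-f}$ to $2^{-m}-1$, and this is exactly where the oddness of $r$ enters: it fixes the sign $(-1)^{|E|+1}=(-1)^{r+f}=-(-1)^{f}$, so that the coefficient of $\iota(D)$ in $S+\phi(S)$ comes out to the constant $2^{k-r-1}$ for every $D\supsetneq C(P)$. The surviving $(\{1\}^{k})$-contributions combine to $\tfrac{1+(-1)^{k}}{2}(\{1\}^{k})$, which yields the parity dichotomy. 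Finally, since $r$ is odd, the definition of $\phi$ gives $\sum_{D\supseteq C(P)}\iota(D)=-\phi(P)$, hence $2^{k-r-1}\sum_{D\supsetneq C(P)}\iota(D)=-2^{k-r-1}\bigl(P+\phi(P)\bigr)$, which is the asserted right-hand side.

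I expect the main obstacle to be the bookkeeping of the second paragraph, namely getting the coefficients $2^{\dep-2}$ right and isolating the one excluded index $P^{\vee}$, together with the sign-and-binomial collapse of the third paragraph, where the hypothesis that $r$ is odd is indispensable. The degenerate boundary case $r=k$, in which $P^{\vee}=(k)$ coincides with the depth-one term, falls outside this bookkeeping and should be verified directly; there it reduces to $0=0$.
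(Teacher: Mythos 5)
Your proposal is correct and follows essentially the same route as the paper's proof: you dualize the $G_2$-sums into $G_1$-type sums (the paper's explicit rewriting of $B$ as compositions sandwiched between strings of $1$'s), count the multiplicity of each fixed index in $B+\phi(B)$, collapse the resulting binomial sum using that $r$ is odd, and identify the right-hand side as $2^{k-r-1}$ times the sum over proper refinements of $(\{1\}^{i-1},k-r+1,\{1\}^{r-i})$. The comma-set/Boolean-lattice encoding and the $\tfrac12(\{1\}^{k})$ correction term are only notational streamlinings of the paper's bookkeeping, not a different argument.
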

\begin{proof}
 Put
 \begin{align*}
  B
  &:=\sum_{l=1}^{k-r-1}
  2^{l-1} G_2((\{1\}^{i-1},l+1,\{1\}^{r-i}),k-r-l) 
  +G_2((\{1\}^{r}),k-r), \\
  C
  &:=-2^{k-r-1}\,( (\{1\}^{i-1},k-r+1,\{1\}^{r-i}) + \phi (\{1\}^{i-1},k-r+1,\{1\}^{r-i})).
 \end{align*}
 Then we have
 \begin{align*}
  B
  &=\sum_{l=1}^{k-r-1} 
  2^{l-1} \sum_{\substack{ \wt (\boldsymbol{e})=k-r-l \\ \dep (\boldsymbol{e})=l+1 }}
  (((i,\{1\}^{l-1},r-i+1)\oplus\boldsymbol{e})^\vee) 
   +(\{1\}^k) \\
  &=\sum_{l=1}^{k-r-1} 
   2^{l-1} \sum_{\substack{ k_1+\cdots+k_{k-r-l+1}=k-r+1 \\ k_1,\dots,k_{k-r-l+1}\ge1 }}
   (\{1\}^{i-1},k_1,\ldots,k_{k-r-l+1},\{1\}^{r-i}) 
   +(\{1\}^k). 
 \end{align*}
 Fix an index $(\{1\}^{i-1}, a_1,\ldots,a_d, \{1\}^{r-i})$ of weight $k$ and depth $d+r-1$ with $2\le d\le k-r$. 
 Then the numbers of indices $(\{1\}^{i-1}, a_1,\ldots,a_d, \{1\}^{r-i})$ included in $B$ and $\phi(B)$ are, respectively, 
 \[
  2^{k-r-d} \quad \textrm{ and }\quad \sum_{l=k-r-d+1}^{k-r-1} \,2^{l-1}\cdot (-1)^{k-l}\cdot \binom{d-1}{k-r-l}.
 \]
 We have 
 \begin{align*}
  &2^{k-r-d}+\sum_{l=k-r-d+1}^{k-r-1} \,2^{l-1}\cdot (-1)^{k-l}\cdot \binom{d-1}{k-r-l}\\
  &=2^{k-r-d}+2^{k-r-d}\cdot(-1)^{r+d+1} \cdot \sum_{l=0}^{d-2} (-2)^l \binom{d-1}{l} \\
  &=2^{k-r-d}\cdot(-1)^d\cdot \left( (-1)^d+\sum_{l=0}^{d-2} (-2)^l \binom{d-1}{l} \right) \quad (\textrm{by } r\textrm{:odd}).
 \end{align*}
 Since
 \[
  (1-2x)^n=\sum_{a=0}^{n} (-2x)^a \cdot \binom{n}{a},
 \]
 we have
 \[
  \sum_{l=0}^{d-2} (-2)^l \binom{d-1}{l}=(-1)^{d-1}-(-2)^{d-1}
 \]
 by substituting $x=1$, $a=l$, and $n=d-1$. 
 Thus we get
 \begin{align*}
  &2^{k-r-d}+\sum_{l=k-r-d+1}^{k-r-1} \,2^{l-1}\cdot (-1)^{k-l}\cdot \binom{d-1}{k-r-l}\\
  &=2^{k-r-d}\cdot(-1)^{d} \cdot \left( (-1)^d +(-1)^{d-1}-(-2)^{d-1} \right) \\
  &=2^{k-r-1}.  
 \end{align*}
 Similarly, the number of indices $(\{1\}^{k})$ in $B+\phi(B)$ is
 \begin{align*}
  &1+\sum_{l=1}^{k-r-1} \,2^{l-1}\cdot (-1)^{k-l}\cdot \binom{k-r}{k-r-l} +(-1)^k \\
  &=1+\frac{1}{2} (-1)^k \sum_{l=1}^{k-r-1} (-2)^l \binom{k-r}{l} +(-1)^k \\
  &=1+\frac{1}{2} (-1)^k \left( (-1)^{k-r} -1-(-2)^{k-r} \right) +(-1)^k \\
  &=2^{k-r-1} +\frac{1}{2} (1+(-1)^k)  \qquad (\textrm{by } r\textrm{:odd}) \\
  &=
   \begin{cases}
    2^{k-r-1}+1 \quad & (k\textrm{:even}),\\
    2^{k-r-1} \quad & (k\textrm{:odd}).
   \end{cases}
 \end{align*}
 Then we find
 \begin{align*}
  &B+\phi(B) \\
  &=\sum_{d=2}^{k-r} \sum_{ \substack{ a_1+\cdots+a_d=k-r+1 \\ a_1,\dots,a_d\ge1 } } 
   2^{k-r-1} (\{1\}^{i-1}, a_1,\ldots,a_d, \{1\}^{r-i}) \\
  &\quad 
  +\begin{cases}
    (2^{k-r-1}+1) (\{1\}^k) \quad & (k\textrm{:even}),\\
    2^{k-r-1} (\{1\}^k) \quad & (k\textrm{:odd}). 
   \end{cases}
 \end{align*}
 On the other hand, by the direct caluclation, we have 
 \[
  C=\sum_{d=2}^{k-r+1} \sum_{ \substack{ a_1+\cdots+a_d=k-r+1 \\ a_1,\dots,a_d\ge1 } } 
   2^{k-r-1} (\{1\}^{i-1}, a_1,\ldots,a_d, \{1\}^{r-i}).
 \]
 This finishes the proof.  
\end{proof}
\begin{proof}[Proof of Lemma \ref{key_lem}]
 By Lemmas \ref{lem1} and \ref{lem2}, we easily find the lemma holds. 
\end{proof}
\begin{proof}[Proof of Theorem \ref{main} (the first statement)]
 By Theorems \ref{phi} and \ref{ohnoF}, and Lemma \ref{key_lem}, we see the theorem holds.
\end{proof}

Now we prove the second statement of Theorem \ref{main}. 
We can prove this in the same manner as in Hirose-Murahara-Saito \cite{HMS18}.
The following formulas are well known (see e.g., Sakugawa-Seki \cite{SS17}).
\begin{lem} \label{antipode} 
 For positive integers $k_{1},\dots,k_{r}$, we have
 \[ 
  \sum_{l=0}^{r}(-1)^l \zeta_\mathcal{F}^{\star}(k_{1},\dots,k_{l})\zeta_\mathcal{F} (k_{r},\dots,k_{l+1})=0.
 \]
 Here, we understand $\zeta_\mathcal{F}(\emptyset)=\zeta_\mathcal{F}^{\star}(\emptyset)=1$.
\end{lem}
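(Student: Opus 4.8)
The plan is to lift the identity to the abstract harmonic (stuffle) algebra and then push it through $\zeta_\mathcal{F}$, so that the finite and symmetric cases are settled at the same time. I will use two structural facts, both known for $\mathcal{F}=\mathcal{A}$ and $\mathcal{F}=\mathcal{S}$: first, that $\zeta_\mathcal{F}$ is a homomorphism for the harmonic (stuffle) product, which I denote $\ast$, i.e.\ $\zeta_\mathcal{F}(\boldsymbol{a}\ast\boldsymbol{b})=\zeta_\mathcal{F}(\boldsymbol{a})\,\zeta_\mathcal{F}(\boldsymbol{b})$; second, that the star values are images of an explicit element of $\mathcal{I}$. Writing $\boldsymbol{k}^{\star}\in\mathcal{I}$ for the sum of the indices obtained by replacing each internal comma of $\boldsymbol{k}$ by either a comma or a plus sign, one has $\zeta_\mathcal{F}^{\star}(\boldsymbol{k})=\zeta_\mathcal{F}(\boldsymbol{k}^{\star})$: for $\mathcal{F}=\mathcal{A}$ this is the decomposition of each `$\le$' into `$<$' or `$=$', and for $\mathcal{F}=\mathcal{S}$ it is precisely the definition of $\zeta_\mathcal{S}^{\star}$ recalled in the introduction.

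Granting these, the left-hand side of the lemma equals $\zeta_\mathcal{F}(X)$, where
\[
 X:=\sum_{l=0}^{r}(-1)^{l}\,(k_{1},\dots,k_{l})^{\star}\ast(k_{r},\dots,k_{l+1})\in\mathcal{I}.
\]
It therefore suffices to prove the purely combinatorial identity $X=0$ in $(\mathcal{I},\ast)$; applying $\zeta_\mathcal{F}$ then disposes of both cases simultaneously. To prove $X=0$ I would use the monomial realization $\Phi\colon(\mathcal{I},\ast)\hookrightarrow\mathrm{QSym}$ sending $(a_{1},\dots,a_{s})$ to $\sum_{0<n_{1}<\cdots<n_{s}}x_{n_{1}}^{a_{1}}\cdots x_{n_{s}}^{a_{s}}$, which is an injective algebra map, so that $X=0$ is equivalent to $\Phi(X)=0$. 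Under $\Phi$ the factor $(k_{1},\dots,k_{l})^{\star}$ becomes $\sum_{0<n_{1}\le\cdots\le n_{l}}\prod_{i=1}^{l}x_{n_{i}}^{k_{i}}$, while $(k_{r},\dots,k_{l+1})$, after reversing the order of its summation variables, becomes $\sum_{n_{l+1}>\cdots>n_{r}>0}\prod_{i=l+1}^{r}x_{n_{i}}^{k_{i}}$. Multiplying and regrouping by the tuple $\boldsymbol{n}=(n_{1},\dots,n_{r})$ gives
\[
 \Phi(X)=\sum_{\boldsymbol{n}}\Bigl(\prod_{i=1}^{r}x_{n_{i}}^{k_{i}}\Bigr)\sum_{l\in L(\boldsymbol{n})}(-1)^{l},
\]
where $L(\boldsymbol{n})$ is the set of $l$ with $n_{1}\le\cdots\le n_{l}$ and $n_{l+1}>\cdots>n_{r}$.

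The heart of the matter is then the elementary claim that $\sum_{l\in L(\boldsymbol{n})}(-1)^{l}=0$ for every tuple $\boldsymbol{n}$. Recording each adjacent comparison as $\mathrm{U}$ (for `$\le$') or $\mathrm{D}$ (for `$>$'), membership $l\in L(\boldsymbol{n})$ says that comparisons $1,\dots,l-1$ are all $\mathrm{U}$ and comparisons $l+1,\dots,r-1$ are all $\mathrm{D}$, the comparison at position $l$ being free. If some $\mathrm{D}$ precedes some $\mathrm{U}$ in the comparison word then no $l$ can satisfy both requirements and $L(\boldsymbol{n})=\emptyset$; otherwise the word has the form $\mathrm{U}^{a}\mathrm{D}^{d}$ with $a+d=r-1$, and exactly the two consecutive values $l=a$ and $l=a+1$ qualify, so the two signs cancel. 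Either way the inner sum vanishes and $\Phi(X)=0$. The step I expect to be the main obstacle is this per-tuple cancellation, specifically verifying the unimodal characterization of $L(\boldsymbol{n})$ and ruling out the boundary configuration that would leave a single surviving split point; the remaining external input I rely on — that $\zeta_\mathcal{S}$, and not only $\zeta_\mathcal{A}$, is a homomorphism for the harmonic product — is standard.
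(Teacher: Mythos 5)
Your proof is correct, but there is nothing in the paper to compare it with: the paper does not prove Lemma \ref{antipode} at all, it simply records it as well known with a pointer to Sakugawa--Seki \cite{SS17}. So what you have written is a genuine self-contained proof of a fact the paper treats as imported. Your argument is, in substance, the standard one: lifted to Hoffman's quasi-shuffle algebra, the identity says exactly that $(k_1,\dots,k_l)\mapsto(-1)^l(k_l,\dots,k_1)^{\star}$ behaves as the antipode, and your per-tuple sign cancellation is the usual telescoping argument in disguise --- splitting on the free comparison at position $l$ (your ``U or D at position $l$'') writes each term $A_l$ as $B_l+B_{l+1}$, and the alternating sum collapses; your observation that $L(\boldsymbol{n})$ is either empty or a pair $\{a,a+1\}$ of consecutive integers is precisely that collapse organized monomial by monomial. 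The three inputs you rely on are all sound: $\zeta_{\mathcal{A}}$ is a harmonic-product homomorphism because the truncated sums $\sum_{0<n_1<\cdots<n_r<p}$ satisfy the stuffle product exactly; $\zeta_{\mathcal{S}}$ is one because $\zeta_{\mathcal{S}}^{\ast}$ satisfies the harmonic relations (Kaneko--Zagier, Jarossay \cite{Jar14}) and reduction mod $(\zeta(2))$ is an algebra map; and $\zeta_{\mathcal{F}}^{\star}(\boldsymbol{k})=\zeta_{\mathcal{F}}(\boldsymbol{k}^{\star})$ holds for $\mathcal{A}$ by splitting each `$\le$' into `$<$' or `$=$' and for $\mathcal{S}$ by the very definition of $\zeta_{\mathcal{S}}^{\star}$ in the introduction. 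What your route buys is uniformity: the combinatorial identity $X=0$ is proved once in $(\mathcal{I},\ast)$ (injectivity of the monomial realization into $\mathrm{QSym}$ is legitimate, as indices map to the monomial basis), and both cases $\mathcal{F}=\mathcal{A},\mathcal{S}$ follow by applying a homomorphism; the price is that for $\mathcal{S}$ you must invoke the nontrivial stuffle property of $\zeta_{\mathcal{S}}$, which you correctly flag as the external input --- note this is no heavier than what the paper itself assumes elsewhere, since Theorem \ref{phi} for $\mathcal{S}$ rests on the same circle of results.
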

\begin{prop}[Hoffman \cite{Hof15}, Murahara \cite{Mur15}] \label{sym_sum} 
 For positive integers $k_{1},\dots,k_{r}$, we have
 \begin{align*} 
  \sum_{\sigma\in\mathfrak{S}_r} \zeta_\mathcal{F} (k_{\sigma(1)},\ldots,k_{\sigma(r)})=0, \\
  \sum_{\sigma\in\mathfrak{S}_r} \zeta_\mathcal{F}^{\star} (k_{\sigma(1)},\ldots,k_{\sigma(r)})=0,
 \end{align*}
 where $\mathfrak{S}_r$ is the symmetric group of degree $r$. 
\end{prop}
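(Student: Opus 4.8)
The plan is to reduce both identities to the single fact that $\zeta_{\mathcal{F}}(k)=0$ for every positive integer $k$, exploiting that $\zeta_{\mathcal{F}}$ is a homomorphism for the harmonic (stuffle) product for both $\mathcal{F}=\mathcal{A}$ and $\mathcal{F}=\mathcal{S}$. First I would record this depth-one vanishing. For $\mathcal{F}=\mathcal{A}$ it is the classical congruence $\sum_{0<n<p}n^{-k}\equiv 0\pmod p$, valid whenever $p-1\nmid k$ and hence for all but finitely many primes, so $\zeta_{\mathcal{A}}(k)=0$ in $\mathcal{A}$. For $\mathcal{F}=\mathcal{S}$ one computes directly from the definition that $\zeta_{\mathcal{S}}^{\ast}(k)=(1+(-1)^{k})\,\zeta^{\ast}(k)$, which vanishes for odd $k$ and equals $2\zeta(k)$ for even $k$; the latter is a rational multiple of $\zeta(2)^{k/2}$, so $\zeta_{\mathcal{S}}(k)=0$ modulo $\zeta(2)$.

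Next I would invoke the harmonic product. Writing $z_{k}$ for the generators of the stuffle algebra, the homomorphism property gives
\[
 \zeta_{\mathcal{F}}(k_{1})\cdots\zeta_{\mathcal{F}}(k_{r})
 =\zeta_{\mathcal{F}}(z_{k_{1}}\ast\cdots\ast z_{k_{r}}).
\]
In the expansion of $z_{k_{1}}\ast\cdots\ast z_{k_{r}}$, the terms of maximal depth $r$ are exactly the interleavings without merging, namely $\sum_{\sigma\in\mathfrak{S}_{r}}z_{k_{\sigma(1)}}\cdots z_{k_{\sigma(r)}}$, while every remaining term has depth strictly less than $r$ and is a monomial whose entries are the sums of the $k_{j}$ over the blocks of a set partition of $\{1,\dots,r\}$ having fewer than $r$ blocks. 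The left-hand side above vanishes because each factor is zero, so I would argue by induction on $r$: the lower-depth contribution regroups into symmetric sums of depth $<r$ whose arguments are the block-sums over set partitions of $\{1,\dots,r\}$, each of which vanishes by the induction hypothesis, forcing the depth-$r$ symmetric sum to vanish as well.

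Equivalently, and perhaps more cleanly, I would quote Hoffman's explicit symmetric sum formula. It is an identity in the stuffle algebra, so applying $\zeta_{\mathcal{F}}$ expresses $\sum_{\sigma\in\mathfrak{S}_{r}}\zeta_{\mathcal{F}}(k_{\sigma(1)},\dots,k_{\sigma(r)})$ as a $\mathbb{Q}$-linear combination of products $\prod_{B}\zeta_{\mathcal{F}}\!\left(\sum_{j\in B}k_{j}\right)$ indexed by the blocks $B$ of set partitions of $\{1,\dots,r\}$. Every factor is a single-argument value and hence vanishes, so each product, and therefore the whole sum, is zero. For the star identity I would run the same argument for the star stuffle product, for which $\zeta_{\mathcal{F}}^{\star}$ is again a homomorphism and Murahara's star analogue of Hoffman's formula applies; alternatively, one may deduce it from the non-star case together with the antipode relation of Lemma \ref{antipode}.

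The main obstacle is the bookkeeping in the inductive step: one must check that the lower-depth stuffle terms assemble into complete symmetric sums, each $\mathfrak{S}$-orbit of arguments occurring with the correct multiplicity, so that the induction hypothesis applies verbatim. Making this regrouping precise is exactly the combinatorial content of Hoffman's identity, and it is where the genuine work lies; the arithmetic inputs (the depth-one vanishing and the homomorphism property) are comparatively routine.
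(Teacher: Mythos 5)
The paper does not actually prove Proposition \ref{sym_sum}: it is quoted as a known result, with the proofs residing in Hoffman \cite{Hof15} (non-star case) and Murahara \cite{Mur15} (star case). Your argument is correct, and it is in substance the argument of those references: the symmetric sum is the maximal-depth part of the stuffle product $z_{k_1}\ast\cdots\ast z_{k_r}$, the lower-depth stuffle terms regroup by unordered set partition into complete symmetric sums of the block sums, and induction on depth reduces everything to the vanishing of depth-one values, which you verify correctly for both $\mathcal{A}$ (power sums mod $p$) and $\mathcal{S}$ (the computation $\zeta_{\mathcal{S}}^{\ast}(k)=(1+(-1)^k)\zeta^{\ast}(k)$ together with $\zeta(k)\in\mathbb{Q}\,\zeta(2)^{k/2}$ for even $k$). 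Two points deserve care. First, for $\mathcal{F}=\mathcal{S}$ the stuffle homomorphism property of $\zeta_{\mathcal{S}}$ is itself a nontrivial theorem (Kaneko--Zagier \cite{KZ17}, Jarossay \cite{Jar14}) and should be cited as such; it is an input of depth comparable to the proposition being proved, which is acceptable here only because the proposition is itself a literature citation. Second, $\zeta^{\star}_{\mathcal{F}}$ is \emph{not} a homomorphism for the same stuffle product: the correct product is the modified quasi-shuffle in which each merge carries a minus sign, e.g.\ $\zeta^{\star}_{\mathcal{F}}(a)\,\zeta^{\star}_{\mathcal{F}}(b)=\zeta^{\star}_{\mathcal{F}}(a,b)+\zeta^{\star}_{\mathcal{F}}(b,a)-\zeta^{\star}_{\mathcal{F}}(a+b)$. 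Since the lower-depth terms are still indexed (now with signs) by ordered set partitions, your regrouping and induction go through verbatim; alternatively, your suggested reduction via Lemma \ref{antipode} is clean: summing the antipode relation over $\mathfrak{S}_r$, every term with $l<r$ factors through a full non-star symmetric sum over the complementary $r-l\ge1$ arguments and vanishes by the first identity, leaving $(-1)^r\sum_{\sigma\in\mathfrak{S}_r}\zeta^{\star}_{\mathcal{F}}(k_{\sigma(1)},\ldots,k_{\sigma(r)})=0$. That second route is exactly parallel to how the paper deduces the star case of Theorem \ref{main} from the non-star case.
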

\begin{proof}[Proof of Theorem \ref{main} (the second statement)]
 By Lemma \ref{antipode}, we have
 \[
  \sum_{l=0}^r (-1)^l 
  \sum_{\substack{k_{1}+\cdots+k_{r}=k\\k_1,\dots,k_r\ge1}}
  2^{k_{i}}\,\zeta_{\mathcal{F}}^{\star}(k_{1},\ldots,k_{l}) \zeta_{\mathcal{F}}(k_{r},\ldots,k_{l+1})
 =0.
 \]
 By using the first statement of Theorem \ref{main} and Proposition \ref{sym_sum}, we find the result.
\end{proof}

\section*{Acknowledgement}
The author would like to express his sincere gratitude to 
Doctor Minoru Hirose, Takuya Murakami, and Professor Shingo Saito 
for valuable comments. 
He would also like to thank Doctor Masataka Ono 
for carefully reading the manuscript.


\end{document}